\newcommand{\D}{\mathbb{D}}
\font\sets=msbm10 scaled \magstep1
\def\R{\text{\sets R}}
\def\N{\text{\sets N}}
\def\C{\text{\sets C}}
\newcommand{\fff}{\mathcal{F}}
\renewcommand\Re{\operatorname{Re}}
\renewcommand\Im{\operatorname{Im}}
\renewcommand\arg{\operatorname{arg}}
\newcommand{\stex}{\mathcal{X}}
\newcommand{\CBB}{\textcolor{blue}}
\theoremstyle{plain}
\newtheorem{theorem}{Theorem}
\newtheorem{proposition}[theorem]{Proposition}
\newtheorem{lemma}[theorem]{Lemma}
\newtheorem{corollary}[theorem]{Corollary}
\author[O. Hirviniemi]{Olli Hirviniemi}
\address{University of Helsinki, Department of Mathematics and Statistics, P.O. Box 68 , FIN-00014 University of Helsinki, Finland}
\email{olli.hirviniemi@helsinki.fi}
\author[I. Prause]{Istv\'an Prause}
\address{Department of Physics and Mathematics, University of Eastern Finland, P.O. Box 111, 80101 Joensuu, Finland}
\email{istvan.prause@uef.fi}
\author[E. Saksman]{Eero Saksman}
\address{University of Helsinki, Department of Mathematics and Statistics, P.O. Box 68 , FIN-00014 University of Helsinki, Finland}
\email{eero.saksman@helsinki.fi}
\thanks{The work was supported by the Finnish Academy Coe `Analysis and Dynamics' and the Finnish Academy projects  1266182, 1303765, and 1309940. }
\title[Quasiconformal stretching and rotation on a line]{Stretching and rotation of planar quasiconformal mappings on a line}
\begin{document}

\begin{abstract}
In this article, we examine stretching and rotation of planar quasiconformal mappings on a line. We show that for almost every point on the line, the set of complex stretching exponents (describing stretching and rotation jointly) is contained in  the disk $ \overline{B}(1/(1-k^4),k^2/(1-k^4))$. This yields a quadratic improvement over the known optimal estimate for general sets of Hausdorff dimension $1$. Our proof is based on holomorphic motions and estimates for dimensions of quasicircles. We also give a lower bound for the dimension of the image of a $1$-dimensional subset of a line under a quasiconformal mapping.
\end{abstract}

\maketitle

\smallskip

\noindent {\bf Keywords:} quasiconformal mappings, holomorphic motions, stretching and rotation

\noindent {\bf AMS (2010) Classification:} Primary 30C62 

\noindent {\bf Data availability:} Not applicable

\section{Introduction}\label{se:intro}

Let $f: \C \to \C$ be a $K$-quasiconformal mapping. The classical local H\"older continuity result 
\[
C^{-1}|z-z_0|^{K} \leq |f(z)-f(z_0)| \leq C|z-z_0|^{\frac{1}{K}} 
\]
where $C=C(f,z_0,K)$, known as Mori's theorem (see e.g. \cite[Theorem 3.10.2]{AIM}) describes the extremal stretching properties of $f$ at every point $z_0 \in \C$. Bounds for local rotation are obtained from  \cite[Theorem 3.1]{AIPS}:
\begin{equation}\label{eq:gamma}
\limsup_{z \to z_0} \frac{|\arg (f(z)-f(z_0))|}{|\log|f(z)-f(z_0)||} \leq \frac{1}{2}\left( K - \frac{1}{K} \right).
\end{equation}
Moreover, as argument is obtained from the imaginary part of the complex logarithm, they can be studied jointly by considering the behaviour of the fraction
\[
\frac{\log(f(z)-f(z_0))}{\log(z-z_0)}
\]
as $z \to z_0$. The set of accumulations points  $\alpha(1+i\gamma)$ of the fraction is called $\stex_f(z_0)$ -- we refer to the precise definition \eqref{eq:cse} in next section. The real part  $\alpha>0$ measures the local stretching, and the ratio of the imaginary and the real part $\gamma\in\R$ the local rotation at $z_0$.

The multifractal spectra estimates from \cite{AIPS} give the optimal upper bound for the Hausdorff dimension of the set where prescribed stretching and rotation can happen. Namely, 
if for every $z \in E\subset \C$ 
we have $\alpha(1+i\gamma)\in \stex_f(z)$, 
then by \cite[Theorem 1.4]{AIPS}
\[
\dim E  \leq 1 + \alpha-\frac{1}{k}\sqrt{(1-\alpha)^2+(1-k^2)\alpha^2\gamma^2},
\]
and this upper bound is sharp. Here $k = (K-1)/(K+1)$. 

In order to give for the stated bound  a more accessible geometric interpretation, we note that equivalently for any $0 \leq s \leq 2$, for almost all points $z$ with respect  to the $s$-dimensional Hausdorff measure the set of complex stretching exponents $\stex_f(z)$
lie in the closed disk with the geometric diameter
\begin{equation}\label{eq:geometric}
\left[ \frac{1-k}{1+k} + \frac{k}{1+k}s , \frac{1+k}{1-k} - \frac{k}{1-k}s \right].
\end{equation}

In this paper we study  behaviour of quasiconformal maps on  lines, in which situation  it  is natural to expect more constrained behaviour than on general 1-dimensional sets. Indeed, this is quantified by our main result as follows:
\begin{theorem} \label{th:stex1}
Let $\phi : \C \to \C$ be a $K$-quasiconformal mapping. For almost every $x \in \R$ with respect to 1-dimensional Lebesgue measure we have that $\mathcal{X}_{\phi}(x) \subset \overline{B}(1/(1-k^4),k^2/(1-k^4))$. Here $k=(K-1)/(K+1)$.
\end{theorem}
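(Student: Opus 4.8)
The strategy is to exploit the fact that $\phi$ restricted to a line can be realized as the boundary behavior of a holomorphic motion, and to use the sharp dimension bounds for quasicircles. Concretely, fix the real line $\R\subset\C$ and recall the standard complex-analytic parametrization: writing the Beltrami coefficient $\mu=\mu_\phi$, we embed $\phi$ into the holomorphic family $\{\phi_\lambda\}$ of principal solutions with Beltrami coefficients $(\lambda/k)\mu$ for $\lambda\in k\D$, so that $\phi_k=\phi$ and $\phi_0=\mathrm{id}$. The key point is that this family gives a holomorphic motion of $\R$ over $k\D$, and the image $\phi_\lambda(\R)$ is a $K(\lambda)$-quasicircle with $K(\lambda)=(1+|\lambda|)/(1-|\lambda|)$. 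For a.e.\ $x\in\R$, the local scaling exponents of $\phi_\lambda$ at $x$ assemble into a holomorphic function of $\lambda$; this is the mechanism that upgrades a dimension bound (valid for the whole line, or a.e.\ point of it) into a pointwise statement about $\stex_\phi(x)$.

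\smallskip

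The steps, in order, would be as follows. First I would set up the holomorphic motion $\Phi:k\D\times\R\to\C$, $\Phi(\lambda,x)=\phi_\lambda(x)$, and invoke measurable dependence / the theory of principal solutions to see that $\lambda\mapsto\phi_\lambda$ is holomorphic with the stated Beltrami coefficients. Second, I would introduce, for a fixed $x$ and small $r>0$, the quantity $F_r(\lambda)=\dfrac{\log\big(\phi_\lambda(x+r)-\phi_\lambda(x)\big)}{\log r}$ (suitably normalized and with a branch of the logarithm chosen consistently), and argue that along a subsequence $r_j\to 0$ these converge, locally uniformly in $\lambda$, to a holomorphic function $F(\lambda)$ on $k\D$ whose value at $\lambda=k$ is a point of $\stex_\phi(x)$ — here one uses a normal families argument together with the two-sided distortion bounds (Mori) that keep $\Re F_r$ bounded in $[1/K,K]$, hence the family precompact. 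Third, I would bring in the dimension estimate: the image $\phi_\lambda(\R)$ is a $K(\lambda)$-quasicircle, and by the known sharp bound (Smirnov's theorem, $\dim\leq 1+|\lambda|^2$, equivalently the specialization of \eqref{eq:geometric} to the line) one controls, for a.e.\ $x$, where $F(\lambda)$ can lie: the value $F(\lambda)$ must fall in the closed disk associated with dimension $s=1$ and distortion constant $|\lambda|$ appearing in \eqref{eq:geometric}, i.e.\ in $\overline B\!\left(\tfrac{1}{1-|\lambda|^2},\tfrac{|\lambda|}{1-|\lambda|^2}\right)$, for a.e.\ $x$ — with the null set chosen independently of $\lambda$ by taking a countable dense set of $\lambda$'s and using continuity.

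\smallskip

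The final and most delicate step is to pass from the family of disk-constraints $\{F(\lambda)\in\overline B(\tfrac{1}{1-|\lambda|^2},\tfrac{|\lambda|}{1-|\lambda|^2})\}_{\lambda\in k\D}$ to the single sharp constraint $F(k)\in\overline B(1/(1-k^4),k^2/(1-k^4))$. This is where holomorphicity of $F$ does the real work: one should conjugate by a Möbius map sending the constraint disks to half-planes or to $\D$, so that $\lambda\mapsto G(\lambda):=$ (Möbius image of $F(\lambda)$) becomes a holomorphic map $k\D\to\D$ (or into a half-plane) whose modulus of continuity in $|\lambda|$ is dictated by how the radius $\tfrac{|\lambda|}{1-|\lambda|^2}$ shrinks as $|\lambda|\to0$; a Schwarz–Pick / subordination estimate then pins down $G(k)$, and transporting back gives exactly the disk $\overline B(1/(1-k^4),k^2/(1-k^4))$, which one recognizes as the image of the relevant disk-of-disks under the Möbius normalization at radius $k$. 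The main obstacle I anticipate is precisely the bookkeeping in this last step — choosing the right Möbius normalization so that the $\lambda$-dependence becomes a clean Schwarz-lemma estimate, and verifying that the resulting optimal disk is $\overline B(1/(1-k^4),k^2/(1-k^4))$ rather than something larger — together with the technical care needed to make the branch of the logarithm in $F_r$ well-defined and to ensure the exceptional null set of $x$ does not depend on $\lambda$.
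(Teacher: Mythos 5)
Your setup --- the holomorphic family $\phi_\lambda$ with $\mu_\lambda=(\lambda/k)\mu$, the normal-families limit $F(\lambda)$ of $\log\bigl(\phi_\lambda(x+r)-\phi_\lambda(x)\bigr)/\log r$, and the plan to pin down $F(k)$ by a Schwarz-type argument --- is in the spirit of the paper, but the final step has a genuine gap. The constraint you propose to feed into the Schwarz lemma is the specialization of \eqref{eq:geometric} to $s=1$ applied to each $\phi_\lambda$, i.e.\ $F(\lambda)\in\overline{B}\bigl(1/(1-|\lambda|^2),|\lambda|/(1-|\lambda|^2)\bigr)$. Conjugating by the M\"obius map $w\mapsto 1-1/w$ turns this family of disks into $|G(\lambda)|\le|\lambda|$ with $G(0)=0$, and Schwarz--Pick then yields only $|G(k)|\le k$ --- which transported back is exactly the disk with diameter $[1/(1+k),1/(1-k)]$ you started from, with no improvement. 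To reach $\overline{B}(1/(1-k^4),k^2/(1-k^4))$ you need $|G(k)|\le k^2$, hence $G'(0)=0$, and a two-sided constraint of width linear in $|\lambda|$ cannot force the derivative to vanish: $G(\lambda)=\lambda$ is admissible and gives $F(k)=1/(1-k)$, outside the target disk. Your parenthetical ``Smirnov's theorem $\dim\le 1+|\lambda|^2$, equivalently the specialization of \eqref{eq:geometric} to the line'' conflates two very different inputs: the pointwise multifractal bound at $s=1$ is linear in $|\lambda|$, while the quasicircle dimension bound is quadratic, and only the latter can produce the theorem.

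What is missing is the mechanism that converts the quasicircle dimension bound --- a statement about the Hausdorff dimension of the set $\phi_\lambda(\R)$, not about the exponent at a single point --- into a constraint on a holomorphic function of $\lambda$. The paper does this in Proposition \ref{th:discEstimate}: a Vitali family of disks centered on $\R$ is moved by $\phi_\lambda$, a self-similar Cantor set $C_\lambda$ on a $|\lambda|/\rho$-quasicircle is built from the images (Lemma \ref{lem:qcircle}), and the variational principle for the pressure gives $I_p\le(1+|\lambda|^2/\rho^2)\,\Re\Lambda_p(\lambda)$, i.e.\ the \emph{one-sided, quadratic} constraint $\Re\Phi(\lambda)\ge-|\lambda|^2/\rho^2$ on $\Phi=1-I_p/\Lambda_p$. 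It is precisely this one-sidedness plus quadratic order that forces $\Phi'(0)=0$ in Lemma \ref{lem:holo} and hence $|\Phi(k)|\le(k/\rho)^2+o(1)$; a convexity/covering argument then transfers the conclusion back to a.e.\ $x$. Your pointwise $F(\lambda)$ has no access to the dimension of the image curve, and the only a priori pointwise a.e.\ bound is the linear one (assuming a quadratic pointwise bound for each $\phi_\lambda$ would be circular, as that is the statement being proved). A minor further point: the motion should be parametrized over $\rho\D$ with $k<\rho<1$ so that $\lambda=k$ is an interior point; over $k\D$ the evaluation point sits on the boundary and the Schwarz estimate degenerates.
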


\noindent One should note that the bound \eqref{eq:geometric} in case $s=1$ only yields that  $\stex_f(x)$ is included in a disk  with the diameter $[1/(1+k), 1/(1-k)]$, while Theorem \ref{th:stex1} gives as geometric diameter the segment $[1/(1+k^2), 1/(1-k^2)]$. In terms of pure rotation, the following corollary follows immediately from Theorem \ref{th:stex1}.

\begin{corollary}\label{co:rotation}
Let $\phi : \C \to \C$ be a $K$-quasiconformal mapping. For almost every $x \in \R$ with respect to 1-dimensional Lebesgue measure we have that 
\[
\gamma = \limsup_{r\to 0} \Big| \frac{\arg(\phi(x+r)-\phi(x))}{\log|\phi(x+r)-\phi(x)|} \Big| \leq \frac{k^2}{\sqrt{1-k^4}},
\]
where $k=(K-1)/(K+1)$.
\end{corollary}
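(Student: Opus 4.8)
The plan is to reduce the statement to the multifractal machinery behind \cite[Theorem~1.4]{AIPS}, and then to run that machinery with the sharp dimension bound for quasicircles (Smirnov's $1+k^{2}$) substituted for the distortion bound for general $1$-dimensional sets that \cite{AIPS} relies on (whose analogue is $1+k$). The appearance of $1-k^{4}$ in place of $1-k^{2}$ is precisely the trace of replacing $k$ by $k^{2}$ in the relevant extremal computation.

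Since complex stretching exponents are invariant under pre- and post-composition with affine maps and the conclusion is local and almost everywhere, it suffices to show that for every closed ball $\overline{D}$ disjoint from $\overline{B}(1/(1-k^{4}),k^{2}/(1-k^{4}))$ the set $\{x\in\R:\stex_{\phi}(x)\cap\overline{D}\neq\emptyset\}$ has Hausdorff dimension strictly less than $1$; covering the open complement of the target disk by countably many such balls and using that a set of positive length has dimension $1$ then shows the exceptional set is null. To study such a set, embed $\phi$ in a holomorphic motion: with $k_{0}=\|\mu_{\phi}\|_{\infty}\le k$ put $\mu_{t}=(t/k_{0})\mu_{\phi}$ for $t\in\D$ and let $\phi^{t}$ be the normalised principal solution, so that $\phi^{0}=\mathrm{id}$, $\phi^{k_{0}}$ equals $\phi$ up to an affine map, $t\mapsto\phi^{t}(z)$ is holomorphic for each $z$, $\phi^{t}$ has Beltrami norm $\le|t|$, and $\phi^{t}(\R)$ is a $|t|$-quasicircle. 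For $x\in\R$ and small $r>0$ set
\[
u_{r,x}(t)=\frac{1}{\log r}\,\log\frac{\phi^{t}(x+r)-\phi^{t}(x)}{r},\qquad t\in\D ,
\]
a holomorphic function with $u_{r,x}(0)=0$; for the approach along $\R$ the exponent $\beta$ lies in $\stex_{\phi}(x)$ exactly when $\beta-1$ is a subsequential limit of $u_{r,x}(k_{0})$ as $r\to0$ (a general approach $z\to x$ is treated by the same argument applied to $\frac1{\log r}\log\frac{\phi^{t}(x+re^{i\theta})-\phi^{t}(x)}{r}$, since $\Im\log(z-x)$ stays bounded). Mori's theorem and \eqref{eq:gamma} applied to $\phi^{t}$ make $\{u_{r,x}\}_{r}$ a normal family whose values, for $r$ small, lie in a compact set depending only on $|t|$; used alone this input recovers only the weaker disk of \eqref{eq:geometric} at $s=1$.

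Now fix $\beta=\alpha(1+i\gamma)\in\overline{D}$ and suppose for contradiction that $\dim\{x:\beta\in\stex_{\phi}(x)\}=1$. Running the variational argument of \cite{AIPS}: one restricts to a subset of full dimension on which, along a suitable sequence of scales, the logarithmic distortion of $\phi^{t}$ is controlled by $\Re u_{r,x}(t)$ uniformly in $x$, with value essentially $\alpha$ at $t=k_{0}$; one bounds $\dim\phi^{t}(\{x:\beta\in\stex_{\phi}(x)\})$ from below by this distortion and from above by Smirnov's bound $\dim\phi^{t}(\R)\le 1+|t|^{2}$, chooses $t$ on circles $|t|=\rho$, and optimises over the auxiliary holomorphic data exactly as in \cite{AIPS}, but with the pair $(1+|t|^{2},|t|^{2})$ replacing $(1+|t|,|t|)$ throughout. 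This yields $\dim\{x:\beta\in\stex_{\phi}(x)\}\le 1+\alpha-\frac{1}{k^{2}}\sqrt{(1-\alpha)^{2}+(1-k^{4})\alpha^{2}\gamma^{2}}$, so that
\[
1\ \le\ 1+\alpha-\frac{1}{k^{2}}\sqrt{(1-\alpha)^{2}+(1-k^{4})\alpha^{2}\gamma^{2}} ,
\]
which rearranges to $1-2\alpha+(1-k^{4})\alpha^{2}(1+\gamma^{2})\le 0$, that is, to $|(1-k^{4})\beta-1|\le k^{2}$, contradicting $\beta\notin\overline{B}(1/(1-k^{4}),k^{2}/(1-k^{4}))$. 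Since the argument is uniform for $\beta\in\overline{D}$, this gives the dimension bound of the previous paragraph, and hence the theorem.

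I expect the main difficulty to be securing the estimate ``for almost every $x$'' coherently along the motion: Smirnov's bound is available for each separate $\phi^{t}$, but in the variational step the expansion behaviour of $\phi^{t}$ on the relevant set must be extracted simultaneously as $t$ ranges over a circle — one uses the Hölder continuity of holomorphic motions, a countable dense set of parameters, and a Borel--Cantelli or Fatou argument to fix a common sequence of scales — and the optimisation must be organised so that the constant that emerges is exactly $k^{2}/(1-k^{4})$ and not a larger multiple of it. A more routine point is checking that passing from a single exponent to a small ball of exponents (needed both for the reduction and for the case of a general approach $z\to x$) costs nothing in the limit.
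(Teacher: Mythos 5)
There is a genuine gap at the centre of your argument. The entire weight of your proof rests on the asserted multifractal bound on the line, $\dim\{x\in\R:\beta\in\stex_\phi(x)\}\le 1+\alpha-\frac{1}{k^{2}}\sqrt{(1-\alpha)^{2}+(1-k^{4})\alpha^{2}\gamma^{2}}$, which you claim follows by ``running the variational argument of \cite{AIPS} with $(1+|t|^{2},|t|^{2})$ replacing $(1+|t|,|t|)$ throughout''. No such statement is proved in the paper, and the substitution is not a routine one: the input you want to use (Smirnov's bound $\dim\phi^{t}(\R)\le 1+|t|^{2}$) enters the variational scheme only after one knows that the relevant \emph{holomorphically moving Cantor sets}, built from disks centred on $\R$ at $t=0$, stay on $|\lambda|/\rho$-quasicircles for every parameter on a circle $|\lambda|=\rho$; this is exactly Lemma \ref{lem:qcircle}, proved via the extended $\lambda$-lemma, and it is then exploited not pointwise in $x$ but through a \emph{single} probability distribution $p$, the holomorphic Lyapunov exponent $\Lambda_p(\lambda)$, and the Schwarz-type Lemma \ref{lem:holo} applied to $\Phi(\lambda)=1-I_p/\Lambda_p(\lambda)$ (Proposition \ref{th:discEstimate}). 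Your sketch instead requires ``logarithmic distortion of $\phi^{t}$ controlled by $\Re u_{r,x}(t)$ uniformly in $x$'' simultaneously for all $t$ on a circle, and you yourself flag this as the main difficulty; the proposed remedies (H\"older continuity of motions, a dense set of parameters, Borel--Cantelli) are not shown to close it, and they are not the mechanism that makes the argument work. Note also that the paper deliberately avoids any dimension-spectrum claim of the kind you assume: its covering argument (Vitali at a single small scale, then \eqref{eq:techni2} for one fixed $p$) yields only that the exceptional set has Lebesgue measure zero, which is weaker than ``Hausdorff dimension strictly less than $1$'' but suffices for both Theorem \ref{th:stex1} and the corollary. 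So as written, your proof assumes an unproved (and here unprovable-by-citation) strengthening of the paper's main theorem.

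For comparison, the paper's own proof of Corollary \ref{co:rotation} is one line: given Theorem \ref{th:stex1}, the slope of any segment from the origin to a point of $\overline{B}(1/(1-k^{4}),k^{2}/(1-k^{4}))$ lies in $[-k^{2}(1-k^{4})^{-1/2},k^{2}(1-k^{4})^{-1/2}]$, combined with the remark in Section \ref{se:prereq} that any rate of rotation $\gamma$ arises from some complex stretching exponent $\alpha(1+i\gamma)$ (Mori plus compactness). Your write-up also omits this last reduction from rotation rates to stretching exponents and the slope computation, though these are minor; the substantive issue is the unjustified dimension bound described above. Your algebra identifying the disk from the would-be spectrum inequality is, for what it is worth, consistent with the statement of Theorem \ref{th:stex1}.
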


We also obtain the following estimate  for the Hausdorff dimension of the images of subsets of line. The sharp estimate for the dimension of the image of a general $1$-dimensional set $A$ was given in \cite{Astala} as $\dim f(A) \geq 1- k$.

\begin{theorem}\label{cor:ldim}
For any $K$-quasiconformal mapping $f \colon \C \to \C$ and $A \subset \R$ with the Hausdorff dimension 1, $\dim f(A) \geq 1 - k^2$ for $k = (K-1)/(K+1)$.
\end{theorem}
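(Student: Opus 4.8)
\emph{Strategy.} I would follow the pattern of Astala's argument that $\dim f(A)\ge 1-k$ for a general $1$-dimensional set, but replace the generic quasiconformal distortion input by the quasicircle dimension bound $1+k^{2}$ of Smirnov --- the same ingredient that drives Theorem~\ref{th:stex1}. Heuristically, a $K$-quasiconformal map recorded only along a line distorts dimension as if its distortion parameter were $k^{2}$ rather than $k$. One cannot simply invoke Theorem~\ref{th:stex1} here, since that theorem controls the stretching exponent only for Lebesgue-a.e.\ $x\in\R$, while $A$ may be Lebesgue-null; the whole difficulty is to obtain the corresponding control with respect to a Frostman measure carried by $A$.

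\emph{Step 1: reduction to an energy bound.} Since $\dim A=1$, for each $\eps\in(0,1)$ Frostman's lemma gives a compact $A_\eps\subset A$ and a Borel probability measure $\nu$ on $A_\eps$ with $\nu(B(x,r))\le C r^{1-\eps}$; then $\nu$ is non-atomic and has finite $s$-energy for all $s<1-\eps$. As $f$ is a homeomorphism, a change of variables yields
\[
I_t(f_*\nu)=\iint\frac{d\nu(x)\,d\nu(y)}{|f(x)-f(y)|^{t}},
\]
so it is enough to prove $I_t(f_*\nu)<\infty$ whenever $t<(1-\eps)(1-k^{2})$: this forces $\dim f(A)\ge\dim f(A_\eps)\ge t$, and letting $t\uparrow(1-\eps)(1-k^{2})$ and then $\eps\downarrow 0$ gives $\dim f(A)\ge 1-k^{2}$.

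\emph{Step 2: holomorphic motion and the line input.} Let $\mu$ be the Beltrami coefficient of $f$, normalised so $\|\mu\|_\infty\le k$, and for $\lambda\in\D$ let $f^{\lambda}$ be the principal solution of the Beltrami equation with coefficient $\tfrac{\lambda}{k}\mu$, so that $f^{0}=\mathrm{id}$ and $f^{\,k}=f$ up to a conformal normalisation irrelevant for dimensions. Then $(\lambda,z)\mapsto f^{\lambda}(z)$ is a holomorphic motion of $\C$ over $\D$; for fixed distinct $x,y\in\R$ the function $\lambda\mapsto\log|f^{\lambda}(x)-f^{\lambda}(y)|$ is harmonic on $\D$ and equals $\log|x-y|$ at $\lambda=0$. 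The hypothesis $A\subset\R$ enters here: the a priori two-sided bound for the normalised ratio $(\log|x-y|)^{-1}\log|f^{\lambda}(x)-f^{\lambda}(y)|$ is the one dictated by the dimension of the quasicircle $f^{\lambda}(\R)$, i.e.\ by Smirnov's estimate $\dim f^{\lambda}(\R)\le 1+|\lambda|^{2}$ (and its lower-bound counterpart), rather than by the weaker generic bound governed by $1+|\lambda|$. Running the two-constants / harmonic-measure estimate for this harmonic function exactly as in the proof of Theorem~\ref{th:stex1} with $s=1$, but carried out against $\nu$ rather than Lebesgue measure, yields: for every $\delta>0$ there is $r_{0}>0$ so that, in an $L^{1}(\nu\times\nu)$-averaged sense adequate for Step~1,
\[
|f(x)-f(y)|\ \ge\ |x-y|^{\,\frac{1}{1-k^{2}}+\delta}\qquad\text{whenever }0<|x-y|<r_{0}.
\]

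\emph{Step 3: conclusion and the main obstacle.} With this bound, the part of $I_t(f_*\nu)$ over $\{|x-y|<r_{0}\}$ is at most a constant multiple of $\iint_{|x-y|<r_0}|x-y|^{-(\frac{1}{1-k^{2}}+\delta)t}\,d\nu(x)\,d\nu(y)$, finite once $(\tfrac{1}{1-k^{2}}+\delta)t<1-\eps$ because $\nu$ has finite $s$-energy for $s<1-\eps$; the complementary region contributes a bounded amount since $|f(x)-f(y)|$ is bounded away from $0$ there by compactness of $A_\eps$ and injectivity of $f$. Letting $\delta\downarrow 0$ and then $\eps\downarrow 0$ finishes the proof. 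The genuinely non-routine step is Step~2: upgrading the Lebesgue-a.e.\ pointwise statement of Theorem~\ref{th:stex1} to one valid $\nu\times\nu$-a.e.\ for an \emph{arbitrary} Frostman measure forces one to re-run the holomorphic-motion estimates with every $L^{p}$-bound taken with respect to $\nu$ and with the exponent loss $\delta$ kept uniform over $\supp\nu$ --- in other words, to feed the quasicircle dimension bound into the argument at the level of measures rather than merely to quote it.
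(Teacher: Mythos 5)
Your Step 2 is where the argument breaks down, and it is precisely the step the paper has to work for. Smirnov's bound $\dim f^{\lambda}(\R)\le 1+|\lambda|^{2}$ is a statement about the whole image curve; it imposes no constraint on the single harmonic function $\lambda\mapsto\log|f^{\lambda}(x)-f^{\lambda}(y)|$ attached to one pair $(x,y)$. For an individual pair the only a priori input is the uniform quasisymmetry of the motion, and a two-constants/Schwarz-type argument run on that input returns the generic exponents governed by $1/(1\pm k)$, not $1/(1-k^{2})$: the quadratic gain cannot be produced pairwise. In the paper the quasicircle bound enters only \emph{collectively}: one takes a packing of disks $B(x_j,r_j)$ centered on $\R$ with $\sum_j(ar_j)^{\delta}\ge 1$, builds from their $\phi_\lambda$-images a self-similar Cantor set $C_\lambda$ which lies on a $|\lambda|/\rho$-quasicircle (Lemma \ref{lem:qcircle}), and feeds $\dim C_\lambda\le 1+|\lambda|^{2}/\rho^{2}$ into the variational-principle inequality $I_p\le(1+|\lambda|^{2}/\rho^{2})\Re\Lambda_p(\lambda)$ for the entropy and complex Lyapunov exponent of a well-chosen distribution $p$; the Schwarz-type Lemma \ref{lem:holo} is then applied to $\Phi(\lambda)=1-I_p/\Lambda_p(\lambda)$, a function of the whole packing, not of one pair. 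Your proposal simply asserts that ``running the estimate against $\nu$'' yields $|f(x)-f(y)|\ge|x-y|^{1/(1-k^{2})+\delta}$ in an averaged sense; that assertion is exactly the missing content, and nothing in the outline supplies the mechanism (the analogue of the condition $\sum_j(ar_j)^{\delta}\ge 1$ forcing $\Phi(0)$ to be small, and the Cantor-set/pressure construction that converts a curve-dimension bound into distortion information along the packing).

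There is a second, independent problem: even if Step 2 held in the $L^{1}(\nu\times\nu)$-averaged form you state, it would not suffice for Step 3. Finiteness of the Riesz energy $I_t(f_*\nu)$ requires distributional (tail) control of the events $\{|f(x)-f(y)|<|x-y|^{1/(1-k^{2})+\delta}\}$ at each dyadic scale of $|x-y|$, not merely an average bound on the logarithmic ratio; an $L^{1}$ bound on $\log|f(x)-f(y)|/\log|x-y|$ is perfectly compatible with divergence of the energy integral. The paper sidesteps both issues by not going through Frostman measures at all: it covers $f(A)$ by arbitrary small disks, passes via Vitali to a disjoint packing of preimage disks centered on $\R$ (whose $\delta$-sum is large because $A$ has infinite $\delta$-dimensional Hausdorff measure), and converts estimate \eqref{eq:techni1} of Proposition \ref{th:discEstimate} through Jensen's inequality into the lower bound $\sum_j\big|a\big(f(x_j+r_j)-f(x_j)\big)\big|^{1-(k/\rho)^{2}-R(1-\delta)}\ge 1$ valid for every admissible covering, which directly bounds $\dim f(A)$ from below after letting $\rho\to1$ and $\delta\to1$. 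Your energy framework could only work if you first established a measure-level substitute for Proposition \ref{th:discEstimate}, which is the actual heart of the proof.
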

\noindent  This estimate generalizes results from \cite{Pr07} and \cite{PrSm}  where it was assumed that  $f(\R)\subset\R$, and gives a natural counterpart of the estimate $\dim f(\R) \leq 1 + k^2$ from \cite{Smi}. In a similar manner, Theorem \ref{th:stex1} can be viewed as a rotational counterpart of \cite{Smi}. Indeed, the proofs of Theorems \ref{th:stex1} and \ref{cor:ldim} follow the general line of argument of \cite{Astala,PrSm} based on holomorphic motions and pressure estimates adapted to our situation.

\smallskip

\noindent \textbf{Acknowledgements}.\; We are grateful for the referee for many useful comments which improved the readability of the text.

\section{Prerequisites}\label{se:prereq}

We say that 
an orientation preserving homeomorphism $f: \Omega \to \Omega'$ is \emph{$K$-quasiconformal} with $K \geq 1$ if $f \in W^{1,2}_{loc}(\Omega)$ and for almost every $z \in \Omega$ the directional derivatives satisfy
\[
\max_{\alpha} |\partial_{\alpha}f(z)| \leq K \min_{\alpha} |\partial_{\alpha}f(z)|.
\]

Equivalently, we could define quasiconformal mappings via the Beltrami equation: a homeomorphism $f: \Omega \to \Omega'$ between two planar domains is a $K$-quasiconformal mapping if it lies in the Sobolev space $W^{1,2}_{loc}(\Omega)$ and satisfies the Beltrami equation
\[
f_{\overline{z}} = \mu f_z
\]
for some measurable function $\mu : \Omega \to \C$ with $|\mu(z)|\leq k < 1$ where $k = (K-1)/(K+1)$. As $k<1$, we can say that $f$ is $k$-quasiconformal without ambiguity.

We also consider the notion of quasisymmetry. If $\eta : [0,\infty) \to [0, \infty)$ is an increasing homeomorphism, then $f : \Omega \to \Omega'$ is $\eta$-quasisymmetric if for any distinct points $z_0, z_1, z_2 \in \Omega$ we have
\[
\frac{|f(z_0)-f(z_1)|}{|f(z_0)-f(z_2)|} \leq \eta\left( \frac{|z_0-z_1|}{|z_0-z_2|} \right).
\]

Of particular interest is the case where $\Omega = \C$. In this case, any $K$-quasiconformal mapping is $\eta$-quasisymmetric, where $\eta$ depends only on $K$.
The measurable Riemann mapping theorem implies that for any measurable function $\mu$ with $|\mu(z)|\leq k < 1$ there is a unique normalized solution $f$ solving the corresponding Beltrami equation and having $f(0)=0$ and $f(1)=1$.

For more information about quasiconformal mappings, see for example \cite{AIM}.

We  measure the local rotation of a quasiconformal mapping  by comparing the image of a line near a point to logarithmic spirals. 
If the limit
\[
\gamma:=\lim_{r \to 0+} \frac{\arg(f(z_0+r)-f(z_0))}{\log|f(z_0+r)-f(z_0)|}
\]
exists, the rate of rotation can be uniquely defined to be the value of this limit. In general, the limit might not exist, in which case any accumulation point of the fraction as $r \to 0+$ can be called a \emph{rate of rotation} at $z_0$.  One may note that  the different  choices of a continuous branch for the argument  on the image curve differ by a constant, and hence do  not influence the set of accumulation points.  We refer to \cite{AIPS} for a more thorough discussion and equivalent definitions defining the concept of rotation  for quasiconformal maps. 

In fact, it turns out to be useful to consider  local stretching and rotation jointly. Then one compares the local behaviour to that of  the complex power map $f(z) = f(z_0) + \omega \cdot \frac{z-z_0}{|z-z_0|}|z-z_0|^{\alpha(1+i\gamma)}$ defined in the disk $B(z_0,r)$, where $\omega \in \C \setminus \{0\}$, $\alpha > 0$ and $\gamma \in \R$. For these mappings, $|f(z)-f(z_0)| = |\omega| |z-z_0|^\alpha$ and $\arg(f(z)-f(z_0)) = \arg \omega + \arg(z-z_0) + \alpha \gamma \log|z-z_0|$. This means that $\alpha$ measures the stretching of the map near $z_0$, while $\gamma$ determines the local geometric rate of rotation.  


To define the joint exponents  in a  precise way, let $f : \Omega \to \Omega'$ be quasiconformal and $z_0 \in \Omega$ be a point. We introduce the set of \emph{complex stretching exponents} $\stex_f(z_0) \subset \C$ by setting 
\begin{equation}\label{eq:cse}
\stex_f(z_0) = \bigcap_{0 < t_0 \leq 1} \overline{\left\lbrace \frac{\log(f(z_0+t)-f(z_0))}{\log(t)} : 0 < t < t_0  \right\rbrace},
\end{equation}
or in other words, the limit points of the quotient $\log(f(z_0+t)-f(z_0))/\log(t)$ as $t \to 0$. This definition  does not  depend on the chosen continuous branch of the complex logarithm on $f((z_0, z_0+t_0])-f(z_0)$. Note that a branch can be defined along this curve by fixing the value at $f(z_0+t_0)-f(z_0)$. With this definition, we are only approaching $z_0$ along one ray, but again this does not affect the set of possible limits. The  advantage of complex stretching exponents is that they allow for  \emph{holomorphic} dependence which is particularly useful when embedded into a holomorphic motion.

Finally, we remark that if $\tau=\alpha(1+i\gamma)$ is a complex stretching exponent at $z_0$, then $\gamma=\Im \tau/\Re \tau$ is a rate of rotation at $z_0$. The converse holds as well: if $\gamma$ is a rate of rotation at $z_0$, then by Mori's theorem the real parts of corresponding fractions for stretching exponents are bounded, and by compactness we can find a complex stretching exponent $\alpha(1+i\gamma)$ at $z_0$.

\section{Proofs of Theorems \ref{th:stex1} and \ref{cor:ldim}}\label{se:proofs}


We begin by proving the following auxiliary lemma for holomorphic mappings.

\begin{lemma} \label{lem:holo}
Let $h: \D \to \D$ be a holomorphic function such that $h(0)= \varepsilon$ and for all $r\in (0,1)$ we have the inclusion $h(r\D) \subset B((1-r^2)/2,(1+r^2)/2)$. Then for  any  fixed $0 \leq k < 1$ we have
\[
|h(k)| \leq k^2 + o(1)
\]
as $\varepsilon \to 0$.
\end{lemma}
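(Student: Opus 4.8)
The key point is that the extremal mapping for this estimate is $z\mapsto z^{2}$: writing $D_r:=B\left(\tfrac{1-r^{2}}{2},\tfrac{1+r^{2}}{2}\right)$, its image $\{z^{2}:|z|<r\}=B(0,r^{2})$ is internally tangent to $D_r$ at the point $-r^{2}$, so $z^{2}$ is admissible with $\varepsilon=0$ and achieves $|h(k)|=k^{2}$. Three elementary facts about $D_r$ will be used: (i) $D_r$ is the disk with real diameter $[-r^{2},1]$, hence $\max_{w\in\overline{D_r}}|w|=1$; (ii) $0\in D_r$, and the conformal radius of $D_r$ about $0$ equals $2r^{2}/(1+r^{2})\to 0$ as $r\to 0$ --- this is what forces $h'(0)$ to be small; (iii) by continuity, $h(\overline{r\D})\subset\overline{D_r}$ for every $r\in(0,1)$, so by (i) one has $|h(w)|\le 1$ whenever $|w|=r<1$. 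The plan is to factor $h$ through $z^{2}$; the only wrinkle compared with the case $\varepsilon=0$ is that $h$ need not vanish to second order at $0$, so I first estimate $h'(0)$.

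\emph{Step 1: $h'(0)$ is small.} The affine map $\sigma_r(w):=\frac{2w-(1-r^{2})}{1+r^{2}}$ maps $D_r$ bijectively onto $\D$, sends the centre of $D_r$ to $0$, and has constant derivative $2/(1+r^{2})$. Since $h(r\D)\subset D_r$, the holomorphic map $z\mapsto\sigma_r(h(rz))$ sends $\D$ into $\D$ and takes the value $\sigma_r(\varepsilon)$ at $0$, so the Schwarz--Pick inequality at the origin gives
\[
\frac{2r}{1+r^{2}}\,|h'(0)|\le 1-|\sigma_r(\varepsilon)|^{2}.
\]
From $|\sigma_r(\varepsilon)|\ge\frac{(1-r^{2})-2|\varepsilon|}{1+r^{2}}$ one gets $1-|\sigma_r(\varepsilon)|^{2}\le\frac{4(r^{2}+|\varepsilon|)}{(1+r^{2})^{2}}$, hence $|h'(0)|\le\frac{2(r^{2}+|\varepsilon|)}{r}$; taking $r=\sqrt{|\varepsilon|}$ gives $|h'(0)|\le 4\sqrt{|\varepsilon|}$. (When $\varepsilon=0$, letting $r\to0$ simply forces $h'(0)=0$.)

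\emph{Step 2: factoring through $z^{2}$.} Write $h(z)=\varepsilon+h'(0)\,z+z^{2}q(z)$. Since the numerator $h(z)-\varepsilon-h'(0)z$ vanishes to second order at $0$, the function $q$ is holomorphic on $\D$. By fact (iii), for $|w|=\rho<1$ we have $|h(w)|\le1$, so
\[
|w^{2}q(w)|=|h(w)-\varepsilon-h'(0)w|\le 1+|\varepsilon|+|h'(0)|,
\]
whence $|q(w)|\le(1+|\varepsilon|+|h'(0)|)/\rho^{2}$ on $\{|w|=\rho\}$ and, by the maximum principle, on $\overline{\rho\D}$. Letting $\rho\to1$ gives $|q|\le 1+|\varepsilon|+|h'(0)|$ on $\D$, so that
\[
|h(k)|\le|\varepsilon|+|h'(0)|\,k+k^{2}\bigl(1+|\varepsilon|+|h'(0)|\bigr)=k^{2}+|\varepsilon|(1+k^{2})+|h'(0)|(k+k^{2}).
\]
Together with Step 1 this gives $|h(k)|\le k^{2}+O(\sqrt{|\varepsilon|})=k^{2}+o(1)$ as $\varepsilon\to0$, as desired. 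In the idealized case $\varepsilon=0$ the argument collapses to: $h'(0)=0$, $q(z)=h(z)/z^{2}$ is holomorphic with $|q|\le1$ on $\D$, and hence $|h(k)|\le k^{2}$, which is sharp by the example $z\mapsto z^{2}$.

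\emph{Where the difficulty lies.} The conceptual point is recognising that $z\mapsto z^{2}$ is extremal and that $h$ should be compared with it by dividing out $z^{2}$; facts (i)--(iii) about $D_r$ are then routine computations. The genuine technical point is the inhomogeneity $h(0)=\varepsilon\ne0$, which breaks the clean factorization and is dealt with by the derivative bound of Step~1; one can see, by considering admissible maps of the form $\varepsilon+az+z^{2}$, that $|h'(0)|$ can really be of order $\sqrt{|\varepsilon|}$, so that estimate is essentially optimal. If one does not want explicit rates, the same conclusion follows by a normal families argument: if the lemma failed there would be $\varepsilon_n\to0$ and admissible $h_n$ with $|h_n(k)|\ge k^{2}+c$ for a fixed $c>0$; since $|h_n|\le1$, a subsequence converges locally uniformly to a holomorphic $g\colon\D\to\overline{\D}$ with $g(0)=0$ and $g(\overline{r\D})\subset\overline{D_r}$ for all $r$, to which the $\varepsilon=0$ version of Steps 1--2 applies, yielding $|g(k)|\le k^{2}<k^{2}+c\le|g(k)|$ --- a contradiction.
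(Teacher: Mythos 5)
Your proof is correct, but it takes a genuinely different route from the paper. The paper argues by compactness: it forms the family $\fff_\varepsilon$ of admissible maps, uses normality to pass to extremal functions and then to a limit $g$ as $\varepsilon\to 0$ with $g(0)=0$, shows $g'(0)=0$ by noting that a nonzero linear term would produce values with real part below $-|z|^2$ (violating the disk constraint, whose leftmost point is $-r^2$), and concludes $|g(k)|\le k^2$ by the Schwarz lemma. You instead make the argument quantitative: Schwarz--Pick applied to $\sigma_r\circ h(r\,\cdot)$ with the optimized choice $r=\sqrt{|\varepsilon|}$ gives the explicit bound $|h'(0)|\le 4\sqrt{|\varepsilon|}$ (this is the effective version of the paper's ``$g'(0)=0$'' step, exploiting that $\varepsilon$ sits at hyperbolic distance $\approx r^2+|\varepsilon|$ from $\partial B((1-r^2)/2,(1+r^2)/2)$), and then the factorization $h(z)=\varepsilon+h'(0)z+z^2q(z)$ with $|q|\le 1+|\varepsilon|+|h'(0)|$ yields $|h(k)|\le k^2+O(\sqrt{|\varepsilon|})$, uniformly over admissible $h$ and even uniformly in $k$. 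What your approach buys is an explicit rate for the error term (hence an explicit function $R$ in Proposition \ref{th:discEstimate}, where the paper only gets a qualitative $R(t)\to 0$); what the paper's approach buys is brevity and the avoidance of any computation, at the price of losing effectiveness. Two small remarks: your Step 1 tacitly needs $|\varepsilon|$ small (say $|\varepsilon|\le 1/3$, so that $(1-r^2)-2|\varepsilon|\ge 0$ at $r=\sqrt{|\varepsilon|}$), which is harmless since the statement concerns $\varepsilon\to 0$; and in Step 2 the bound $|h(w)|\le 1$ already follows from the hypothesis $h:\D\to\D$, so fact (iii) is not really needed. Your closing normal-families alternative is essentially the paper's proof.
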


\begin{proof}
Consider the following family $ \fff_{\varepsilon}$ of holomorphic functions
\[
\fff_\varepsilon = \{f : \D \to \D : |f(z)-(1-|z|^2)/2| \leq (1+|z|^2)/2, |f(0)|\leq \varepsilon\}.
\]
Let $\psi(\varepsilon) := \sup \{|f(k)| : f \in \fff_{\varepsilon}\}$. As the family $\fff_{\varepsilon}$ is normal by Montel's theorem, for each $\varepsilon>0$ we may find a sequence $(f_n)$ in $\fff_{\varepsilon}$, converging uniformly on compact subsets, such that $|f_n(k)| \to \psi(\varepsilon)$. It is easy to see that the limit function $h_\varepsilon: =\lim_{n\to\infty} f_n$ belongs to $\fff_{\varepsilon}$.

Clearly $\psi$ is decreasing in $\varepsilon$ and we need   to show that $\lim_{\varepsilon\to 0}\psi(\varepsilon) \leq k^2$, or equivalently that $\lim_{m\to\infty}  |h_{1/m}(k)|\leq k^2.$  Using the fact that $\fff_1$ is normal, we can further assume that $(h_{1/m})$ converges uniformly on compact subsets to a limit function $g$. Especially, then $\lim_{m\to\infty}  h_{1/m}(k)=g(k).$

The limit function $g:\D\to \D$ is analytic and satisfies $g(0)=0$ together with
\begin{equation}\label{eq:SchDisk}
|g(z)-(1-|z|^2)/2| \leq (1+|z|^2)/2
\end{equation}
for any $z \in \D$. If $g'(0)\neq 0$, then consider small values of $z$ in the direction $-\overline{g'(0)}$. The series expansion of $g$ is $g(z)=g'(0)z+O(|z|^2)$, which yields values with real part smaller than $-|z|^2$ for $z$ sufficiently close to $0$. This together with \eqref{eq:SchDisk} implies that $g'(0)=0$.

Finally,  the Schwarz lemma yields that $|g(z)/z| \leq |z|$. In particular, $|g(k)| \leq k^2$, which proves the lemma.
\end{proof}

The following proposition contains crucial ingredients of the proof and  after it  both Theorem \ref{th:stex1} and Theorem \ref{cor:ldim} follow rather effortlessly. As in similar consideration before (see e.g. \cite{Astala}, \cite{Smi}, \cite{PrSm}) the idea is to embed our quasiconformal map into a holomorphic motion. Assuming that the exponents deviate from the value $1$ in a large subset of $\R$ will then -- together with their holomorphic dependence -- lead to effective estimates via application of the connection to the thermodynamical pressure  characterisation of the Hausdorff dimension. A key new ingredient here is the use the fact that our set lies on the real axis, which is utilized in  Lemma \ref{lem:qcircle} and by the dimension estimate of quasicircles \cite{BP,Smi}, see especially equation \eqref{eq:apu}.

\begin{proposition} \label{th:discEstimate} 
Let $0\leq k<1$ and let $\phi: \C \to \C$ be a $k$-quasiconformal mapping with $\phi(0)=0$ and $\phi(1)=1$. Assume that $k < \rho < 1$ and $0 < \delta \leq 1$. There exists a constant $a = a(\rho) > 0$ with the following property: for any finite or countably infinite collection of disjoint disks $B(x_j,r_j)$ contained in $\D$ with $x_j \in \R$ and $\sum_j(ar_j)^\delta \geq 1$, there exists a probability distribution $p = (p_j)$ such that

\begin{equation}\label{eq:techni1}
\Re\left( \frac{-\sum_{j}p_j \log p_j}{-\sum_j p_j \log \Big(a\big(\phi(x_j+r_j)-\phi(x_j)\big)\Big)} \right) \geq 1 - (k/\rho)^2 - R(1-\delta)
\end{equation}
and
\begin{equation}\label{eq:techni2}
\frac{\sum_j p_j \log \Big(a\big(\phi(x_j+r_j)-\phi(x_j)\big)\Big)}{\sum_j p_j \log (ar_j)} \in \bigcup_{b\in [\delta,1]} \overline{B}(b/(1-s^2),bs/(1-s^2)), 
\end{equation}
where $s = (k/\rho)^2 + R(1-\delta)$ for some function $R$ with $R(t) \to 0$ as $t \to 0$. 
\end{proposition}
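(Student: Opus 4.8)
The plan is to embed the $k$-quasiconformal map $\phi$ into a holomorphic motion and then run a thermodynamic-pressure/Bernoulli-measure argument, using Lemma~\ref{lem:holo} applied to the holomorphic dependence of the complex stretching data on each disk, and feeding in the quasicircle dimension bound to control the real parts. First I would choose a Beltrami coefficient $\mu$ with $\phi = \phi^\mu$ and consider the family $\phi^{\lambda\mu/k}$ for $\lambda \in \D$; by the measurable Riemann mapping theorem with holomorphic parameter dependence, $\lambda \mapsto \phi^{\lambda\mu/k}$ is a holomorphic motion, and for each $j$ the quantity
\[
h_j(\lambda) := \text{(a normalized version of)}\ \frac{\log\big(a(\phi^{\lambda\mu/k}(x_j+r_j)-\phi^{\lambda\mu/k}(x_j))\big)}{\log(a r_j)}
\]
depends holomorphically on $\lambda \in \D$, equals the identity-type value at $\lambda = 0$ (since $\phi^0 = \mathrm{id}$), and takes values in $\D$ after the normalization by $a = a(\rho)$, where $a$ is chosen large enough (depending on the quasisymmetry modulus $\eta$ of $K$-quasiconformal maps) that all the disks $B(x_j, r_j) \subset \D$ have images comfortably inside $\D$ under the renormalization. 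Evaluating at $\lambda = k/\rho$ recovers $\phi$ up to the scaling $k \mapsto k/\rho \cdot k = k^2/\rho$, which is why the parameter $\rho$ and the appearance of $(k/\rho)^2$ show up.

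Next I would construct the probability vector $p = (p_j)$ as the equilibrium/Bernoulli weights. The natural choice is to take $p_j$ proportional to $(a r_j)^{\delta}$ — which is admissible precisely because $\sum_j (a r_j)^\delta \geq 1$ — or, more carefully, to solve for the exponent $t$ with $\sum_j (a r_j)^t = 1$ (pressure zero) so that $t \le \delta$, and set $p_j = (a r_j)^t$. Then $-\sum_j p_j \log p_j = t \sum_j p_j \log(a r_j)\cdot(-1)$... more precisely $-\sum p_j \log p_j = -t\sum p_j \log(ar_j)$, and the ratio in \eqref{eq:techni1} becomes $t$ divided by $\Re$ of the average of $\log(a(\phi(x_j+r_j)-\phi(x_j)))/\log(ar_j)$-type quantities weighted by $p_j$. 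The point of introducing $\delta$ and the error term $R(1-\delta)$ is to absorb the gap between $t$ and $\delta$ when $\sum (ar_j)^\delta > 1$ strictly; I expect $R(1-\delta) = O(1-\delta)$ comes out of a routine convexity/Hölder estimate comparing $\sum (ar_j)^\delta$ with $\sum(ar_j)^t$.

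The heart of the matter is then the two displayed estimates. For \eqref{eq:techni2}: each individual ratio $\log(a(\phi(x_j+r_j)-\phi(x_j)))/\log(ar_j)$, by the sharp quasicircle dimension bound of \cite{BP,Smi} combined with Lemma~\ref{lem:qcircle} (which exploits that $x_j \in \R$, so the relevant distortion is that of a quasi\emph{line} rather than a general set), lies in a disk of the form $\overline{B}(1/(1-s^2), s/(1-s^2))$ with $s = (k/\rho)^2 + R(1-\delta)$; since this disk is convex and the left side of \eqref{eq:techni2} is a $p$-weighted average (after noting $\log(ar_j) < 0$ makes the weights effectively convex-combination coefficients up to the normalization in the denominator), the average stays in the union over $b \in [\delta, 1]$ of the scaled disks — the scaling parameter $b$ tracking $\sum p_j \log(ar_j)/\log(\text{total})$, which ranges in $[\delta,1]$. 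For \eqref{eq:techni1}: here is where Lemma~\ref{lem:holo} enters — applied to $h = h_j$ with $\varepsilon$ reflecting the smallness coming from iterating/taking a limit of disk configurations, it gives $|h_j(k/\rho)| \le (k/\rho)^2 + o(1)$, hence $\Re h_j(k/\rho) \ge 1 - (k/\rho)^2 - o(1)$ after translating the disk bound $|h-(1-r^2)/2|\le (1+r^2)/2$ into a lower real-part bound; averaging and dividing by $t/\delta \ge 1-O(1-\delta)$ yields \eqref{eq:techni1}.

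The main obstacle I anticipate is not any single estimate but the bookkeeping that makes the $o(1)$ in Lemma~\ref{lem:holo} into the \emph{uniform} error $R(1-\delta)$ appearing in the Proposition: one must argue that by a compactness/renormalization argument (replacing the given disk collection by a limiting self-similar-type configuration, or by subdividing), the "$\varepsilon$" in the hypotheses $h(0) = \varepsilon$ can be driven to $0$ at a rate controlled only by $1-\delta$, independently of the particular disks $B(x_j, r_j)$. Verifying that the hypothesis $h(r\D) \subset B((1-r^2)/2, (1+r^2)/2)$ of Lemma~\ref{lem:holo} genuinely holds for the holomorphic functions $h_j$ built from the motion — i.e. that the stretching/rotation data of a quasicircle through a point of $\R$ obeys exactly this disk constraint at every intermediate radius — is the other delicate point, and it is precisely the content one extracts from Lemma~\ref{lem:qcircle} together with the dimension bounds; I would isolate this as a separate claim and invoke \eqref{eq:apu} for it.
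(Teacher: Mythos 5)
Your overall framework is the right one --- the holomorphic motion $\mu_\lambda=\lambda\mu/k$, the equilibrium weights $p_j\propto (ar_j)^\delta$, the quasicircle dimension bound entering because the configuration starts on $\R$, and an application of Lemma \ref{lem:holo} at the rescaled parameter $k/\rho$ --- but the central step is applied to the wrong object, and this is a genuine gap. You propose to apply Lemma \ref{lem:holo} and the disk constraint $\overline{B}\big(1/(1-s^2),s/(1-s^2)\big)$ to the \emph{individual} per-disk functions $h_j(\lambda)$, claiming that ``each individual ratio $\log\big(a(\phi(x_j+r_j)-\phi(x_j))\big)/\log(ar_j)$ lies in a disk of the form $\overline{B}(1/(1-s^2),s/(1-s^2))$'' and then averaging over a convex set. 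That claim is false: the complex stretching exponent of a $k$-quasiconformal map at a single point and scale is constrained only by Mori-type bounds (real part roughly in $[1/K,K]$), which is far outside the quadratically small disk with $s\approx k^2$. There is no per-disk dimension statement available; the quasicircle estimate $\dim\le 1+|\lambda|^2/\rho^2$ is a statement about the self-similar Cantor set $C_\lambda$ generated by the \emph{entire} collection, and it translates into the aggregate inequality $I_p\le(1+|\lambda|^2/\rho^2)\,\Re\Lambda_p(\lambda)$ relating the entropy $I_p=-\sum_j p_j\log p_j$ to the averaged complex Lyapunov exponent $\Lambda_p(\lambda)=-\sum_j p_j\log r_j(\lambda)$.

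The missing idea is to form the single holomorphic function $\Phi(\lambda)=1-I_p/\Lambda_p(\lambda)$ on $\rho\D$: the inclusion \eqref{eq:apu} holds for $\Phi$ (and only for this aggregate quantity), precisely because $I_p/\Re\Lambda_p(\lambda)\le\dim C_\lambda$, with Lemma \ref{lem:qcircle} supplying that $C_\lambda$ sits on a $|\lambda|/\rho$-quasicircle. Once this is in place, your anticipated ``main obstacle'' of driving the $\varepsilon$ in Lemma \ref{lem:holo} to zero by a compactness or renormalization argument over disk configurations evaporates: the hypothesis $\sum_j(ar_j)^\delta\ge1$ means $P_0(\delta)\ge0$, so the variational choice of $p$ gives $I_p/\Lambda_p(0)\ge\delta$ and hence $0\le\Phi(0)\le1-\delta$ directly; thus $\varepsilon=1-\delta$ and $R(1-\delta)$ is exactly the $o(1)$ term of Lemma \ref{lem:holo}. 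Both \eqref{eq:techni1} and \eqref{eq:techni2} then follow from $|\Phi(k)|\le(k/\rho)^2+R(1-\delta)$ together with the identity $\sum_j p_j\log r_j(k)\big/\sum_j p_j\log r_j(0)=(1-\Phi(0))/(1-\Phi(k))$; no convexity argument over individual terms is needed, and none would work.
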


\begin{proof} Let us first assume that the given collection of disks is finite.
Let $\mu$ be the Beltrami coefficient of $\phi$, and set
\[
\mu_\lambda = \lambda \frac{\mu}{k}.
\]
This defines a family of Beltrami coefficients that depend analytically on $\lambda\in\D$. If we denote by $\phi_\lambda$ the unique solutions of the corresponding Beltrami equations with fixed points $0$ and $1$, then $\phi_0$ is the identity mapping of $\C$. Then $\phi_\lambda$ also depends analytically on $\lambda$ by Ahlfors-Bers theorem \cite{AhBe}. Moreover, the original $\phi$ can obtained from $\phi_k = \phi$.

 Recall that  $\rho\in(k,1)$. The mappings $\phi_\lambda$ have Beltrami coefficients bounded uniformly above by $\rho$ for all $|\lambda|< \rho$. Thus by uniform quasiconformality, the mappings $\phi_\lambda$ are  uniformly $\eta$-quasisymmetric for some $\eta = \eta_\rho$, see \cite[Theorem 3.5.3]{AIM}. This implies weak quasisymmetry: whenever $|x-z| \leq |y-z|$ and $|\lambda| < \rho$, we have
\[
\frac{|\phi_\lambda(x)-\phi_\lambda(z)|}{|\phi_\lambda(y)-\phi_\lambda(z)|} \leq \eta\left( \frac{|x-z|}{|y-z|}\right) \leq \eta(1) = C =C(\rho).
\]
Then the disks $B(\phi_\lambda(x_j),\frac{1}{C}|\phi_\lambda(x_j+r_j)-\phi_\lambda(x_j)|)$ are disjoint because they are contained in disjoint sets $\phi_\lambda (B(x_j,r_j))$. Moreover, these disks are contained in $B(0,C)$ as $|\phi_\lambda(z)-\phi_\lambda(0)|/|\phi_\lambda(1)-\phi_\lambda(0)| \leq \eta(|z|) \leq C$ for $z\in\D$, and hence we have the following holomorphic family of disks in unit disk:
\begin{equation}\label{eq:disks} 
B\left(\frac{1}{C}\phi_\lambda(x_j), \frac{1}{C^2}|\phi_\lambda(x_j+r_j)-\phi_\lambda(x_j)| \right).
\end{equation}

We fix the constant in the theorem by setting $a= 1/C^2$. Let $C_\lambda$ be the Cantor set generated by the contractive similarities of the unit disk onto the disks in the above collection \eqref{eq:disks} with rotation in directions given by $\phi_\lambda(x_j+r_j)-\phi_\lambda(x_j)$. Then $C_\lambda$ lies on the image of the real line under a $|\lambda|/\rho$-quasiconformal mapping. We refer to Lemma \ref{lem:qcircle} below for this fact and a more detailed definition of the Cantor set.

Let us denote the \emph{complex radii} by $r_j(\lambda) = a(\phi_\lambda(x_j+r_j)-\phi_\lambda(x_j)) \in \mathbb{C}\setminus{\{0\}}$. We apply Jensen's inequality to the pressure function to obtain the following auxiliary result for any strictly positive probability distribution $p$ (meaning that $p_j>0$, $\sum_j p_j= 1$) and $d\in(0,2]$:
\begin{equation} \label{eq:pressure}
P_\lambda(d) := \log \sum_j |r_j(\lambda)|^d = \log \sum_j p_j \frac{|r_j(\lambda)|^d}{p_j} \geq \sum_j p_j\log \frac{|r_j(\lambda)|^d}{p_j} = I_p - d \Re \Lambda_p(\lambda),
\end{equation}
with the equality reached when all $|r_j(\lambda)|^d / p_j$ have the same value. This application of Jensen's inequality is from \cite{Astala} and one may note that it is special instance of the so-called \emph{variational principle} in thermodynamics. Above, $I_p$ is the \emph{entropy}
\[
I_p = -\sum_j p_j \log p_j
\]
and $\Lambda_p$ is the \emph{complex Lyapunov exponent}
\[
\Lambda_p(\lambda) = - \sum_j p_j \log r_j(\lambda).
\]
By choosing a holomorphic branch of the logarithm, the function $\Lambda_p(\lambda)$ becomes holomorphic in $\lambda$. By Remark 2.3 of \cite{AIPS} this choice of the branch is consistent with the geometric definition in Section 2.

Let $d_\lambda$ be the Hausdorff dimension of $C_\lambda$. It easily follows that $P_\lambda(d_\lambda)=0$. In fact, when this observation  is combined with the basic dimension formula for self-similar fractals  (see e.g. \cite[Theorem 4.14]{Mat}). we see that  the following are equivalent:

\medskip

(i) \,\, $d \leq d_\lambda$.

\smallskip

(ii) \, $P_\lambda(d) \geq 0$.

\smallskip

(iii) There is a probability distribution $p$ such that $I_p - d \Re \Lambda_p(\lambda) \geq 0$.

\medskip

By assumption of the Proposition we have $\sum (ar_j)^\delta \geq 1$, or in other words, $P_0(\delta) \geq 0$.
Let $p$ be the maximizer of $I_p - \delta \Re \Lambda_p(0)$ in \eqref{eq:pressure} and define the holomorphic function
\[
\Phi(\lambda) = 1- \frac{I_p}{\Lambda_p(\lambda)}, \quad \lambda\in\rho\D
\]

We choose the branches of the logarithms in the sum for $\Lambda_p$ so that $\Im \log r_j(0) = 0$. As obviously $\dim C_\lambda \leq 2$, it follows that $I_p \leq 2 \Re \Lambda_p(\lambda)$ for any $|\lambda|<\rho$, and thus $\Phi(\rho\D) \subset \D$.

For any $\lambda$ with $|\lambda |<\rho$, Lemma \ref{lem:qcircle} below verifies that $C_\lambda$ lies on a $|\lambda|/\rho$-quasicircle. As any such quasicircle has a Hausdorff dimension at most $1+|\lambda|^2/\rho^2$ by \cite{Smi}\footnote{We remark here that we could use a weaker estimate for the dimension of quasicircles of the form $1+C k^2$ (for $C<\infty$) of \cite{BP} and modify the assumptions in Lemma \ref{lem:holo} accordingly -- its conclusion still remains valid.}, it follows that $I_p \leq (1+|\lambda|^2/\rho^2) \Re \Lambda_p(\lambda)$. This means that
\begin{equation}\label{eq:apu}
\Phi(\lambda) \in \overline{B}((1-|\lambda|^2/\rho^2)/2,(1+|\lambda|^2/\rho^2)/2),
\end{equation}
or in other words, $\Phi$ maps a disk of radius $r$ centered at origin into a disk with (geometric) diameter $[-r^2/\rho^2,1]$.

Since the logarithms in the sum defining $\Lambda_p(0)$ are real, $I_p-\delta \Lambda_p(0) \geq 0$, or $I_p/\Lambda_p(0) \geq \delta$. It follows that $0 \leq \Phi(0) \leq 1-\delta$.
Lemma \ref{lem:holo} applied to $\lambda \mapsto \Phi(\rho \lambda)$ then implies that
\begin{equation}\label{eq:holoEst}
|\Phi(k)| \leq (k/\rho)^2 + R(1-\delta), 
\end{equation}
where $R$ is a mapping from $[0,1]$ that has $R(t) \to 0$ as $t \to 0$. This proves the first statement (\ref{eq:techni1}) of the theorem.
We have
\[
\frac{\sum_j p_j \log  r_j(k)}{\sum_j p_j \log r_j(0)} = \frac{1-\Phi(0)}{1-\Phi(k)},
\]
where $\delta \leq 1-\Phi(0) \leq 1$, and $1-\Phi(k) \in B(1,(k/\rho)^2 + R(1-\delta))$. Setting $s = (k/\rho)^2 + R(1-\delta)$, this implies that
\[
\frac{\sum_j p_j \log  r_j(k)}{\sum_j p_j \log r_j(0)} \in  \overline{B}(b/(1-s^2),bs/(1-s^2))
\]
for some $b\in[\delta, 1] $. This proves the second statement (\ref{eq:techni2}) of the theorem. Since our estimates are uniform in the number of the disks, the case of infinitely many disks can be obtained from the finite case by a simple limiting argument. The proof is complete.
\end{proof}

During the proof of the preceding proposition, the Cantor set determined by the disks in the holomorphic motion lies always on a quasicircle if the disks are centered on the real line for $\lambda = 0$, see \cite{Pr07}. We provide the details of this for the convenience of the reader.

\begin{lemma} \label{lem:qcircle}
In the proof of Proposition \ref{th:discEstimate}, $C_\lambda$ lies on a $|\lambda|/\rho$-quasiconformal image of the real line.
\end{lemma}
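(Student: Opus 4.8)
The plan is to construct $C_\lambda$ explicitly as a limit set of an iterated function system and to exhibit a single quasiconformal map carrying the real line onto a curve through it. First I would fix $\lambda$ and recall the setup from the proof of Proposition~\ref{th:discEstimate}: for each $j$ we have a contractive similarity $S_j^\lambda$ of $\D$ onto the disk
$B\big(\tfrac1C\phi_\lambda(x_j),\tfrac1{C^2}|\phi_\lambda(x_j+r_j)-\phi_\lambda(x_j)|\big)$,
where the rotation part of $S_j^\lambda$ is chosen in the argument direction of $\phi_\lambda(x_j+r_j)-\phi_\lambda(x_j)$. Writing $w_j=\tfrac1{C^2}\phi_\lambda(x_j)$ and $\rho_j(\lambda)=\tfrac1{C^2}\big(\phi_\lambda(x_j+r_j)-\phi_\lambda(x_j)\big)$, one has $S_j^\lambda(z)=w_j+\rho_j(\lambda)z$, and $C_\lambda=\bigcap_{n}\bigcup_{|w|=n} S_{w_1}^\lambda\circ\cdots\circ S_{w_n}^\lambda(\overline\D)$ is the attractor of the IFS $\{S_j^\lambda\}$.

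The key observation is that at $\lambda=0$ the map $\phi_0=\mathrm{id}$, so $S_j^0(z)=\tfrac1{C^2}x_j+\tfrac1{C^2}r_j z$ is a \emph{real-affine} similarity (real multiplier $r_j/C^2>0$, real translate $x_j/C^2$), hence each $S_j^0$ maps $\R$ into $\R$ and $C_0\subset\R$. I would then produce a bi-infinite symbolic coding of $C_0$: order the disks $B(x_j,r_j)$ along the real line, and let $\Gamma_0\supset C_0$ be the simplest curve obtained by joining, inside each $\overline\D$, the ``landing points'' of consecutive children by straight segments and taking the nested limit — concretely, $\Gamma_0$ is a locally rectifiable arc in $\overline{B(0,1)}$ containing $C_0$, obtained as a uniform limit of polygonal arcs, parametrised so that each cylinder $[w]$ corresponds to a subarc. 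This $\Gamma_0$ is a line segment (or a subarc of $\R$) up to the bounded-geometry gaps, so it is in particular a quasicircle — indeed, a straight line.

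Now I would transport everything by the holomorphic motion. Define $F_\lambda$ on $C_0\cup\Gamma_0$ by declaring, on each cylinder, $F_\lambda\big(S^0_{w_1}\cdots S^0_{w_n}(z)\big)=S^\lambda_{w_1}\cdots S^\lambda_{w_n}(z)$ in the limit; since the two IFS differ only in that the real multipliers $r_j/C^2$ are replaced by the complex multipliers $\rho_j(\lambda)$, and since by the weak-quasisymmetry estimate from the proof of Proposition~\ref{th:discEstimate} the ratios $|\rho_j(\lambda)|\big/(r_j/C^2)$ and the arguments $\arg\rho_j(\lambda)$ are controlled, one checks that $\lambda\mapsto F_\lambda(x)$ is holomorphic for each fixed $x$ and injective in $x$, i.e. $\{F_\lambda\}$ is a holomorphic motion of $C_0\cup\Gamma_0$. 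By the $\lambda$-lemma (Słodkowski), it extends to a holomorphic motion of all of $\C$, and for each $\lambda$ the map $F_\lambda$ is $K(\lambda)$-quasiconformal with $K(\lambda)=\frac{1+|\lambda|/\rho}{1-|\lambda|/\rho}$, i.e. $|\lambda|/\rho$-quasiconformal in the dilatation normalisation used in the paper. Composing $F_\lambda$ with the (straight) parametrisation of $\Gamma_0\supset C_0$ exhibits $C_\lambda=F_\lambda(C_0)$ as lying on the curve $F_\lambda(\Gamma_0)$, which is a $|\lambda|/\rho$-quasiconformal image of the real line. Combined with $\Gamma_0\subset\R$ up to bounded geometry, this gives the claim.

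The main obstacle I anticipate is the bookkeeping needed to verify that the correspondence $z\mapsto F_\lambda(z)$ really is a well-defined holomorphic motion: one must check that the point $F_\lambda(x)$ attached to $x\in C_0\cup\Gamma_0$ does not depend on which nested sequence of cylinders one uses (consistency at shared boundary points of adjacent cylinders), that the defining limit converges uniformly — this uses the uniform contraction ratios coming from weak quasisymmetry, $|\rho_j(\lambda)|\le C r_j$ with $C=C(\rho)$ — and that injectivity is preserved, which again follows from the disjointness of the image disks established earlier. The holomorphy in $\lambda$ is then automatic from the Ahlfors–Bers analytic dependence of $\phi_\lambda$, hence of $w_j=w_j(\lambda)$ and $\rho_j(\lambda)$, on $\lambda$, together with uniform convergence. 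Once these are in place, the quasiconformality constant $|\lambda|/\rho$ is exactly what Słodkowski's theorem delivers for a holomorphic motion parametrised by $\rho\D$ and evaluated at $\lambda$.
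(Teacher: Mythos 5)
Your proposal follows essentially the same route as the paper: define a holomorphic motion of the attractor $C_0\subset\R$ through the analytic dependence of the IFS data on $\lambda$ (the paper gets the motion and its holomorphy from the dense set of fixed points of iterated contractions, where you use nested cylinders, but these are equivalent), extend it to all of $\C$ by the extended $\lambda$-lemma of S\l{}odkowski, and observe that the resulting $|\lambda|/\rho$-quasiconformal image of $\R$ contains $C_\lambda$. The only superfluous element is the auxiliary arc $\Gamma_0$: moving it holomorphically would require an injectivity check on the connecting segments that you do not supply, but it is not needed, since the extended $\lambda$-lemma already upgrades the motion of $C_0$ alone to a motion of the whole plane.
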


\begin{proof}
The disks
\[
B\left(\frac{1}{C}\phi_\lambda(x_j), \frac{1}{C^2}|\phi_\lambda(x_j+r_j)-\phi_\lambda(x_j)| \right)
\]
have disjoint closures for every $|\lambda|<\rho$. Letting $w_j(\lambda)$ and $r_j(\lambda):=\frac{1}{C^2}(\phi_\lambda(z_j+r_j)-\phi_\lambda(z_j))$ be their centers and complex radii, we observe that both are holomorphic functions. All similarities of the form $\gamma_{j,\lambda}(z) = r_j(\lambda)z + w_i(\lambda)$ are strict contractions, and $C_\lambda$ is the unique non-empty compact set such that
\[
C_\lambda = \bigcup_{j} \gamma_{j,\lambda}(C_\lambda).
\]

Any iterated map $\gamma_{j_1,\lambda} \circ \gamma_{j_2,\lambda} \circ \ldots \circ \gamma_{j_k,\lambda}$  has a unique fixed point that belongs to the Cantor set $C_\lambda$ by the basic construction of $C_\lambda $ as a  self-similar fractal. The set of such fixed points is easily seen to be  dense in $C_\lambda$ since their closure $F$ is a non-empty compact set such that
\begin{equation}
\label{eq:Cantor}
F = \bigcup_{j} \gamma_{j,\lambda}(F).
\end{equation}
This equality follows from the fact that if $z_0$ is a fixed point of some iterated map $\gamma$ and $\gamma_0$ is any of the contractions, the sequence of fixed points of mappings $\gamma_0 \circ \gamma^k$ converges to $\gamma_0(z_0)$ as $k$ goes to infinity. By uniqueness of the Cantor set with property \eqref{eq:Cantor}, $F = C_\lambda$.

For any such fixed point $z = z(\lambda)$, we observe that $z(0) \mapsto z(\lambda)$ defines a holomorphic mapping. Using continuity, we can define a mapping $\Psi_\lambda :  C_0 \to C_\lambda$. We have found a holomorphic motion $\Psi: \rho \D \times C_0 \to \C$. Extended $\lambda$-lemma \cite{Slo91,AIM} allows us to extend this motion as $\Psi: \rho \D \times \C \to \C$, and as $C_0 \subset \R$, the claim follows. 
\end{proof}

Let $A$ be a subset of real line with Hausdorff dimension $1$. As a first application of Proposition \ref{th:discEstimate}, we obtain the lower bound for the dimension of the image of $A$ under $k$-quasiconformal mapping, i.e. our Theorem \ref{cor:ldim}.

\begin{proof}[Proof of Theorem \ref{cor:ldim}]
We may assume that $f(0)=0$ and $f(1)=1$. Let $0 < \delta < 1$ and $k < \rho < 1$ be arbitrary. Then as $A$ has Hausdorff dimension larger than $\delta$, some intersection $A \cap [n,n+1)$ must have infinite $\delta$-dimensional Hausdorff measure. Without loss of generality, we may therefore assume that $A \subset \D$.

Fix $\varepsilon > 0$. Let $\{B_\alpha\}_{\alpha \in \mathcal{A}}$ be a covering of $f(A)$ with disks of small enough diameters that $f^{-1}(B_{\alpha})$ have a diameter at most $\varepsilon$. Then these preimages are contained in disks of radius equal to the diameter of the original preimage set, let us call these sets $D_\alpha$. An application of Vitali's covering theorem allows us to take a disjoint countable subcollection $\{D_j = B(x_j,r_j)\}_{j \in \N}$ of these disks such that any of the disks $\{D_\alpha\}_{\alpha \in \mathcal{A}}$ is contained in some disk $5D_j = B(x_j,5r_j)$.

Let $a$ be the constant from Proposition \ref{th:discEstimate} for $k$, $\delta$ and $\rho$. By choosing $\varepsilon$ small enough, we have $\sum_{j \in \N} (10r_j)^{\delta} \geq 10^\delta a^{-\delta}$ since the set $A$ has infinite $\delta$-dimensional Hausdorff measure. Proposition \ref{th:discEstimate} can therefore be applied to this collection of disks, and estimate (\ref{eq:techni1}) yields that for a suitable probability distribution $(p_j)$ we have
\[
\Re\left( \frac{-\sum_{j}p_j \log p_j}{-\sum_j p_j \log \Big(a\big(f(x_j+r_j)-f(x_j)\big)\Big)} \right) \geq 1 - (k/\rho)^2 - R(1-\delta).
\]
It follows that
\[
-\sum_{j}p_j \log p_j - \big( 1 - (k/\rho)^2 - R(1-\delta) \big) \Big( -\sum_j p_j \log \big|a\big(f(x_j+r_j)-f(x_j)\big)\big| \Big) \geq 0,
\]
or equivalently
\[
 \sum_j p_j \log \Big(p_j^{-1}\big|a\big(f(x_j+r_j)-f(x_j)\big)\big|^{1 - (k/\rho)^2 - R(1-\delta)} \Big) \geq 0,
\]
and thus by Jensen's inequality 
\[
 \sum \big|a\big(f(x_j+r_j)-f(x_j)\big)\big|^{1 - (k/\rho)^2 - R(1-\delta)} \geq 1.
\]
We conclude that $\dim f(A) \geq 1-(k/\rho)^2 - R(1-\delta)$ as the original covering of $f(A)$ was arbitrary, only chosen to have small enough diameters. This holds for any $k < \rho < 1$, so letting $\rho \to 1$ shows that $\dim f(A) \geq 1-k^2 - R(1-\delta)$. Finally letting $\delta \to 1$ yields $\dim f(A) \geq 1-k^2$, finishing the proof.
\end{proof}

Next, we  prove our main result.

\begin{proof}[Proof of Theorem \ref{th:stex1}]
We can assume without loss of generality that $\phi(0)=0$ and $\phi(1)=1$. Let $E \subset \C$ be any open convex set with positive distance from $B := \overline{B}(1/(1-k^4),k^2/(1-k^4))$, and $A \subset \R$ be the set of those points $x$ with $\stex_\phi(x) \cap E \neq \emptyset$. It suffices to show that $A$ has measure $0$.

As $E$ and $B$ have positive distance, we can find $0 < \delta < 1$ and $k < \rho < 1$ such that
\[
F := \bigcup_{\delta \leq b \leq 1} b\overline{B}(1/(1-s^2),s/(1-s^2))
\]
has positive distance from $E$. Here $s = (k/\rho)^2 + R(1-\delta)$ and $R$ is from the conclusion of Proposition \ref{th:discEstimate}. Let $a$ be the constant in this theorem corresponding to this choice of $\rho$.

Fix $0 < \varepsilon < 1$. For any $x \in A$, we can find $0 < r_x < \varepsilon$ such that
\[
\frac{\log\big(\phi(x+r_x)-\phi(x)\big) + \log a}{\log(r_x)+\log a} \in E.
\]
Using Vitali covering theorem, we can find a countable collection of disjoint disks $B(x_j,r_{x_j})$ such that for any $y \in A$ there is $j$ such that $B(y,r_y) \subset B(x_j, 5r_{x_j})$. We observe that $m(A) \leq \sum_j 5r_{x_j}$.

On the other hand, if $\sum_{j} (ar_{x_j})^\delta \geq 1$, we can use Proposition \ref{th:discEstimate} to find a probability distribution $p$ such that
\[
\frac{ \sum_j p_j \log\Big(a\big(\phi(x_j+r_{x_j})-\phi(x_j)\big)\Big)}{\sum_j p_j \log(ar_{x_j})} \in F.
\]
But the left hand side is a convex combination of terms belonging in $E$, namely
\[
\frac{ \sum_j p_j \log\Big(a\big(\phi(x_j+r_{x_j})-\phi(x_j)\big)\Big)}{\sum_j p_j \log(ar_{x_j})} = \sum_j \left(\frac{-p_j \log (ar_{x_j})}{-\sum_{\ell}p_{\ell} \log (ar_{x_{\ell}})} \right)  \frac{\log\big(\phi(x_j+r_{x_j})-\phi(x_j)\big) + \log a}{\log(r_{x_j})+\log a}.
\]
This implies that $F \cap E \neq \emptyset$, a contradiction. Therefore, we must have $\sum_{j} (ar_{x_j})^\delta < 1$.

We have obtained
\[
m(A) \leq 5 \sum_j r_{x_j} \leq 5 \sum_j r_{x_j}^\delta \varepsilon^{1-\delta} \leq 5a^{-\delta} \varepsilon^{1-\delta}.
\]
As $0 < \varepsilon < 1$ was arbitrary, it follows that $m(A)=0$.

As the complement of $B$ is a countable union of half-planes with positive distance from $B$, the claim follows.

\end{proof}
 Finally,  Corollary \ref{co:rotation} is obtained easily from Theorem \ref{th:stex1}.
 
 \begin{proof}[Proof of Corollary \ref{co:rotation}] One simply observes that the  slope of any line from the origin to a point in the disk $\overline{B}(1/(1-k^4),k^2/(1-k^4))$ lies on the interval   $[-k^2(1-k^4)^{-1/2}, k^2(1-k^4)^{-1/2}]$.
 \end{proof}

\section{Further comments and discussion}\label{se:comments}

We first discuss the sharpness of our results. In our proof, we used Smirnov's  $(1 + k^2)$-estimate for the dimension of $k$-quasicircles \cite{Smi}, but it should be noted that this bound is not sharp. It was shown by Ivrii in \cite{Ivrii} that the asymptotical expansion of the upper bound for small $k$ is of the form $1 + \Sigma^2 k^2 + O(k^{2.5})$, with a constant $\Sigma^2$ which is strictly smaller than one by the work of Hedenmalm \cite{Hedenmalm}. This sharper version could be used to improve our results slightly since in Lemma \ref{lem:holo} we would have a stricter constraint for the holomorphic function.

The sharpness of the rotational results is tied to the sharpness in stretching. We can embed a quasiconformal mapping $f$ with (close to) extremal stretching behaviour, having $|f(x+t)-f(x)|$ comparable to $t^{1-ck^2}$ for $x$ in a large set, in a holomorphic motion, and use that to find a mapping $g$ with a rate of rotation $ck^2$ on the same set, for small values of $k$. For the readers benefit, let us describe with some more  detailed   (albeit still heuristic)  manner  how this is done. To that end,  let $\phi: \C \to \C$ be a $k$-quasiconformal mapping such that $\stex_\phi(x) \cap \{\Re z < 1-ck^2\} \neq \emptyset$ for $x \in A \subset \R$ with $m(A) > 0$, and let it be normalized as $\phi(0)=0$, $\phi(1)=1$. Letting $\mu$ be the associated Beltrami coefficient, embed this $\phi$ in a holomorphic motion as in our proof by setting $\mu_\lambda = \lambda \frac{\mu}{k}$ and let $\phi_\lambda$ solve the Beltrami equation with this coefficient.
For any $x \in A$ and $r>0$ the function $h_{x,r}: \D \to \C$ defined as $h_{x,r}(\lambda) = \log(\phi_\lambda(x+r)-\phi_\lambda(x))/\log r - 1$ is holomorphic with $h_{x,r}(0) = 0$. Without loss of generality we may assume that for every $x \in A$ the mappings $h_{x,r}$ have the same degree $d$ at origin. Set $\omega = e^{-\frac{\pi}{2d}i}$. Then for sufficiently small $r$ we have $\Re h_{x,r}(k) < -ck^2$ and hence $\Im h_{x,r}(\omega k) > ck^2 + O(k^{d+1})$. This basically implies that $\phi_{\omega k}$ has rotational properties of similar order as the original $\phi$ has stretching properties. For concrete examples with strong stretching on a large set, we refer to \cite[Theorem 5.1]{AIPP} where the considered Julia sets are images of the unit circle under $k$-quasiconformal mappings and their dimensions behave quadratically in $k$, which in turn implies the desired stretching behaviour.

We have formulated our results in terms of a distortion on a line for simplicity. Naturally, the same estimates hold on the unit circle. If one instead considers a conformal map in the unit disk then stronger results are available. Namely, according to a well-known theorem of Makarov \cite{Mak85} the stretching exponent $\alpha=1$ a.e. This has been extended by Binder \cite{Bind97} to cover rotation, accordingly, $\gamma=0$ a.e.  

It is interesting to contrast our results to examples of Julia sets. Due to a classical result of Ruelle \cite{Rue} the Julia set of $z^2+\lambda z$ has Hausdorff dimension $1 + \frac{1}{16\log 2}|\lambda|^2+ O(|\lambda|^3).$ In  \cite{Bind97} Binder established an analogous formula for the geometric rotation, which says that the rotation is  $\frac{\sin \arg \lambda}{64 \log 2} |\lambda|^3+ O(|\lambda|^4)$ almost everywhere with respect to the Hausdorff measure on the Julia set. This yields rotation of lower order than what we obtain for quasiconformal mappings in this paper (but with respect to a different measure). However, the dimension ends up being of the same order for both.

The above results concern typical, that is a.e.~behaviour. See \cite{Pr19} for the opposite end-point, i.e. for results effective for dimension close to zero. It would be interesting to extend these for the intermediate regime and ask what happens outside of a fixed dimension $s \in (0,1)$. One could, in principle, follow a similar approach to what we have presented here but in order to get effective estimates one would seem to need detailed multifractal estimates for harmonic measure.

The improved regularity obtained in Theorem \ref{th:stex1} compared to the estimate \cite{AIPS} naturally stems from the fact that we are considering subsets of the real axis. More generally, it would be interesting to see what other kind of structural assumptions on the underlying set could replace the real axis in this type of results. 
One possibility is to consider Jordan curves, or more generally continua,  with given dimension or given smoothness.

\end{document}